\numberwithin{equation}{section}
\def\1#1{\overline{#1}}
\def\2#1{\widetilde{#1}}
\def\3#1{\widehat{#1}}
\def\4#1{\mathbb{#1}}
\def\5#1{\frak{#1}}
\def\6#1{{\mathcal{#1}}}
\newtheorem{theorem}{Theorem}[section]
\newtheorem{proposition}[theorem]{Proposition}
\theoremstyle{definition}
\theoremstyle{remark}
\newtheorem{remark}[theorem]{Remark}
\numberwithin{equation}{section}
\title{Burns-Krantz rigidity in non-smooth domains}
\author{W\l odzimierz Zwonek}
\address{Institute of Mathematics, Faculty of Mathematics and Computer Science, Jagiellonian University, \L ojasiewicza 6, 30--348 Krak\'ow, Poland}
\email{wlodzimierz.zwonek@uj.edu.pl}
\thanks{Research was partially supported by the Sheng grant no. 2023/48/Q/ST1/00048 of the National Science Center, Poland}
\begin{document}
\begin{abstract} Motivated by recent papers \cite{For-Rong 2021} and \cite{Ng-Rong 2024} we prove a boundary Schwarz lemma (Burns-Krantz rigidity type theorem) for non-smooth boundary points of the polydisc and symmetrized bidisc. Basic tool in the proofs is the phenomenon of invariance of complex geodesics (and their left inverses) being somehow regular at the boundary point under the mapping satisfying the property as in the Burns-Krantz rigidity theorem that lets the problem reduce to one dimensional problem. Additionally, we make a discussion on bounded symmetric domains and suggest a way to prove the Burns-Krantz rigidity type theorem in these domains that however cannot be applied for all bounded symmetric domains.
\end{abstract}

\date{September 2024}

\maketitle

\section{Burns-Krantz rigidity property -- an introduction} Consider the following general problem. Let $D$ be a domain in $\mathbb C^n$ and $p\in\partial D$. Consider a holomorphic mapping $F:D\to D$ that satisfies the property $F(z)=z+o(||z-p||^3)$, $z\in D$ -- in such a situation we say that $F$ satisfies {\it the Burns-Krantz condition at the boundary point $p$}. We want to get {\it Burns-Krantz rigidity type theorem for the pair $(D,p)$} - in other words under which assumptions on $D$ and $p$ one may conclude that $F$ satisfying the Burns-Krantz condition is the identity. Though there are results on the similar property with the exponent other than $3$ we restrict our interest only to this special situation. Let us call the property for $(D,p)$ as {\it the Burns-Krantz rigidity property} (by default with the exponent $3$). In our considerations we focus on Lempert domains (that is taut and such that the Lempert theorem holds). 

We start with some general observation that we then apply in concrete situations. It is also worth mentioning that the interest of the author in the non-smooth case arose after the author has learnt about the results in \cite{For-Rong 2021} and \cite{Ng-Rong 2024} where the Burns-Krantz rigidity type theorem was proven among others for smooth boundary points of the polydisc and bounded symmetric domains. 
%To the astonishment of the author he could not find the %reference for this property for non-smooth boundary points; %even in the case of the bidisc.

The original paper of Burns-Krantz appeared in 1994 (see \cite{Bur-Kra 1994}) and the Burns-Krantz rigidity property was proven for $D$ being a strongly pseudoconvex domain and arbitrary boundary point $p$. Note that at the time of appearence the result was  new even in dimension one and the disc. Later a number of more general results appeared. In our presentation we may find some ressemblence to ideas that can be found in \cite{Hua 1995} and many subsequent papers where the theory of Lempert was used (see e. g. \cite{Lem 1981}, \cite{Lem 1984}). However, unlike the many existing papers we omit the assumption of smoothness of the domain. Instead we concentrate on the regularity assumption of the complex geodesics and left inverses that are guaranteed by the Lempert theorem for sufficiently regular domains (like strongly (linearly) convex ones) - though these properties often hold without the assumption of smoothness of domains. This lets us work in the non-smooth convex setting (more generally Lempert domains) and conclude the Burns-Krantz rigidity property for important domains, like the polydisc (and some bounded symmetric domains) and the symmetrized bidisc. 
%This is astonishing that up to now that case was not discussed in the literature. 
Let us underline that though the method developed by us may also be applied in smooth case we restrict ourselves mostly to the non-smooth case as this seems to give new results. 

Rough idea of our method is to associate with the given complex geodesic the family of its left inverses and then apply the one-dimensional Burns-Krantz rigidity theorem for the disc to conclude that the complex geodesic under the mappings satisfying the Burns-Krantz condition preserves the complex geodesicity and the family of left inverses for complex geodesics.

As a good source for many results in the direction and an updated survey of the known results (in smooth convex case) and potential open problems we recall the paper of Zimmer (\cite{Zim 2022}). As to basics on the theory of holomorphically invariant functions and Lempert theory we refer the readers to \cite{Jar-Pfl 2013}. 

\subsection{Invariance of complex geodesics under maps satisfying the Burns-Krantz condition}
Recall that a domain $D\subset\mathbb C^n$ is called a {\it Lempert domain} if $D$ is taut and the Lempert theorem holds on $D$ (see \cite{Lem 1981}, \cite{Lem 1984}). This may also be formulated so that through any distinct points of $D$ there is a complex geodesic passing through them. The holomorphic mapping $f:\mathbb D\to D$ is called {\it a complex geodesic} if there is a holomorphic function $G:D\to\mathbb D$ such that $G\circ f$ is the identity (in some situations we may only asum $G\circ f$ to be an automorphism of $\mathbb D$). The function $G$ is called of {\it a left inverse for $f$}. By the Lempert theorem (see \cite{Lem 1981}, \cite{Lem 1984}) strongly linearly convex  or bounded convex domains are Lempert domains. We also know that the symmetrized bidisc and the tetrablock are Lempert domains (see \cite{Agl-You 2004}, \cite{Cos 2004}, \cite{Edi-Kos-Zwo 2013}).

Consider a complex geodesic $f:\mathbb D\to D$ that is Lipschitz continuous at $1$, i. e. $f$ extends continuously to $1$ with $f(1)=p$ and  for an open neighborhood $U$ of $1$ the mapping $f$ is Lipschitz continuous on a set $U\cap (\mathbb D\cup\{1\})$. Consider also a left inverse $G:D\to\mathbb D$ for $f$ that is Lipschitz continuous at $p$. Then $G(f(\lambda))=\lambda$, $\lambda\in\mathbb D$ (and then $G(p)=1$).

As already announced we formulate a result that instead of the regularity assumption of the domain lets us work with regular (Lipschitz) complex geodesics and left inverses.

One should mention that the idea of the proof of Proposition~\ref{proposition:invariance-left-inverse} may be essentially found in the proof of Theorem 2.5 in \cite{Hua 1995} (compare also Corollary 2 from \cite{Hua 1994}).

\begin{proposition}\label{proposition:invariance-left-inverse}
    
    Let $D$ be a domain in $\mathbb C^n$, $p\in\partial D$ and let $F:D\to D$ be holomorphic that satisfies the property $F(z)=z+o(||z-p||^3)$. Assume that $f:\mathbb D\to D$ is a complex geodesic that is Lipschitz continuous at $1$ and $f(1)=p$ and let $G:D\to\mathbb D$ be a left inverse to $f$ that is Lipschitz continuous at $p$. Then $G$ is a left inverse to $F\circ f$; in particular, $F\circ f$ is a complex geodesic.
\end{proposition}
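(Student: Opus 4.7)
The plan is to reduce to the one-dimensional Burns--Krantz rigidity theorem applied to the composition $h := G \circ F \circ f : \mathbb{D} \to \mathbb{D}$. This is a holomorphic self-map of the disc, and we want to show $h = \mathrm{id}$; granting that, the identity $G(F(f(\lambda))) = \lambda$ for all $\lambda \in \mathbb{D}$ is exactly the statement that $G$ is a left inverse to $F \circ f$, whence $F \circ f$ is a complex geodesic by definition.

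To apply the classical one-dimensional Burns--Krantz theorem, I need to verify that
\[
h(\lambda) = \lambda + o(|\lambda - 1|^3) \quad \text{as } \lambda \to 1 \text{ in } \mathbb{D}.
\]
Since $G$ is a left inverse to $f$, we have $\lambda = G(f(\lambda))$ for every $\lambda \in \mathbb{D}$, so
\[
h(\lambda) - \lambda = G(F(f(\lambda))) - G(f(\lambda)).
\]
Here the two arguments of $G$ both lie close to $p$ when $\lambda$ is close to $1$: by the Lipschitz continuity of $f$ at $1$ we have $\|f(\lambda) - p\| \le C_1 |\lambda - 1|$ for $\lambda$ near $1$, and then by the Burns--Krantz condition on $F$,
\[
\|F(f(\lambda)) - f(\lambda)\| = o(\|f(\lambda) - p\|^3) = o(|\lambda - 1|^3).
\]
In particular $F(f(\lambda))$ also lies in a neighborhood of $p$ where $G$ is Lipschitz. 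Combining the Lipschitz estimate for $G$ with the previous display gives
\[
|h(\lambda) - \lambda| \le C_2 \, \|F(f(\lambda)) - f(\lambda)\| = o(|\lambda - 1|^3),
\]
which is exactly the hypothesis of the one-dimensional Burns--Krantz rigidity theorem for the disc.

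Thus the classical result forces $h \equiv \mathrm{id}_{\mathbb{D}}$, i.e.\ $G(F(f(\lambda))) = \lambda$ for all $\lambda \in \mathbb{D}$, completing the proof. The argument is essentially a chain-rule-type estimate, and the only genuinely nontrivial input is the one-dimensional Burns--Krantz theorem; the main conceptual point, rather than a technical obstacle, is the observation that the two Lipschitz regularity assumptions are precisely what is needed so that the cubic decay of $F(z) - z$ near $p$ survives composition with $f$ (on the inside) and $G$ (on the outside), passing from a statement about $z \to p$ in $D$ to a statement about $\lambda \to 1$ in $\mathbb{D}$.
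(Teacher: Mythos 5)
Your proof is correct and follows exactly the same route as the paper: write $G(F(f(\lambda)))-\lambda=G(F(f(\lambda)))-G(f(\lambda))$, use the Lipschitz continuity of $G$ at $p$ together with the Burns--Krantz condition on $F$ and the Lipschitz continuity of $f$ at $1$ to get the bound $o(|\lambda-1|^3)$, and then invoke the one-dimensional Burns--Krantz theorem for the disc. There is nothing to add.
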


\begin{proof}
    By the assumptions for $\lambda\in\mathbb D$ close to one we have
\begin{multline}
    G(F(f(\lambda)))-\lambda=G(F(f(\lambda)))-G(f(\lambda))=O(F(f(\lambda))-f(\lambda))=\\
    o(||f(\lambda)-p||^3)=o(||f(\lambda)-f(1)||^3)=o(|\lambda-1|^3).
\end{multline}
It is sufficient to apply the Burns-Krantz rigidity theorem for the disc to get the conclusion.
\end{proof}

\begin{remark}
    Note that the above proposition leads in a natural way to a {\it slice rigidity property} as defined and discussed in \cite{Bra-Kos-Zwo 2021}. Roughly speaking under which assumptions on the domain a holomorphic mapping preserving Kobayashi isometrically complex geodesics from a complete foliation of the domain with the common boundary point must be a biholomorphism.  
    
    It also should be remarked that the case of strongly linearly convex domains lets the existence of smooth complex geodesics joining boundary points and providing the existence of left inverses as proven by Lempert - as the 'regular' left inverse means also that it assumes the values at $\partial \mathbb D$ only along the boundary of the complex geodesic $f(\partial\mathbb D)$ (see e. g. \cite{Cha-Hu-Lee 1988}, \cite{Lem 1981}). That kind of property gives in such a situation that $F$ is the identity when restricted to the image of complex geodesics with the given boundary point. We shall however be interested in applying that method in the cases where no such regularity behaviour is guaranteed by Lempert proofs.
\end{remark}

We apply the result presented in Proposition~\ref{proposition:invariance-left-inverse} in two cases. First we use it in the basic case of the polydisc (that admits quite few left inverses) and then in the case of the symmetrized bidisc (that also has a relatively small (in some sense) and well understood family of left inverses). In the case of the symmetrized bidisc a key role is played by the understanding of (non)-uniqueness of left inverses of complex geodesics (that problem was studied in detail in \cite{Kos-Zwo 2016}).
We also remark that a possible application may go far beyond the examples we study. In particular, a research may be continued to employ results on (uniqueness and regularity) of left inverses not only in the symmetrized bidisc but in a wider class of domains (like the tetrablock).
This is also tempting to apply in the future the developed method while dealing with the Burns-Krantz rigidity property for a wider class of domains.

\section{Polydisc}
We start with the proof of the Burns-Krantz rigidity property for the polydisc.

\begin{remark}
    In the proofs below we shall make repeatedly use of the following property that may be proven directly or is implicitly mentioned in \cite{Kos-Zwo 2016} (Theorem 4.1) or \cite{Agl-Lyk-You 2019b}. Consider, the complex geodesic
    \begin{equation}
        \mathbb D\owns\lambda\to(\lambda,a_2(\lambda),\ldots,a_n(\lambda))\in\mathbb D^n,
    \end{equation}
    where $a_2,\ldots,a_n$ are holomorphic self-mappings of $\mathbb D$ that are not automorphisms. Then the complex geodesic
    has exactly one, uniquely determined left inverse which is the projection on the first coordinate. 
    
    %Moreover, for our purposes we need the functions $a_j$ to be chosen %so that they are holomorphic on a neighborhood of the closure of %$\mathbb D$ (which is possible). This would make the complex %geodesics be Lipschitz and the above defined complex geodesics would %satisfy the assumptions of Proposition~\ref{proposition:invariance-%left-inverse}.  
\end{remark}

\begin{theorem}\label{theorem:burns-krantz-polydisc}
    The Burns-Krantz rigidity theorem holds for the polydisc $\mathbb D^n$ and any boundary point $p\in\partial \mathbb D^n$.
\end{theorem}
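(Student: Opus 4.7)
The plan is to apply Proposition~\ref{proposition:invariance-left-inverse} with a family of complex geodesics whose left inverses isolate the individual coordinates of $F$. Conjugating $F$ by a product of Möbius automorphisms of the disc factors (which preserves both $\mathbb D^n$ and the Burns--Krantz condition at $p$), I may normalize so that $p_j = 1$ whenever $|p_j| = 1$. Set $I := \{j : |p_j| = 1\}$, which is nonempty since $p \in \partial \mathbb D^n$.

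For each $j \in I$ and each $w \in \mathbb D^n$, I would produce a complex geodesic
\[
f^{(j)}(\lambda) = \bigl(a^{(j)}_1(\lambda),\dots,a^{(j)}_{j-1}(\lambda),\,\lambda,\,a^{(j)}_{j+1}(\lambda),\dots,a^{(j)}_n(\lambda)\bigr)
\]
whose $j$-th coordinate is the identity and whose other coordinates $a^{(j)}_\ell\colon\mathbb D\to\mathbb D$ are non-automorphisms, smooth at $1$, interpolating $a^{(j)}_\ell(w_j) = w_\ell$ and $a^{(j)}_\ell(1) = p_\ell$. Such interpolants can be written as $a^{(j)}_\ell = \sigma_\ell \circ B_\ell \circ B_{w_j}$, where $B_{w_j}(\lambda) = (\lambda - w_j)/(1 - \bar w_j \lambda)$ sends $w_j \mapsto 0$ and $1 \mapsto \xi_0 \in \partial\mathbb D$, the Möbius $\sigma_\ell$ sends $0 \mapsto w_\ell$ (so that $\sigma_\ell^{-1}(p_\ell) =: \eta_\ell \in \overline{\mathbb D}$), and $B_\ell(\mu) = (\eta_\ell/\xi_0)\mu$ when $|p_\ell| < 1$ while $B_\ell(\mu) = (\eta_\ell/\xi_0^2)\mu^2$ when $|p_\ell| = 1$; in both cases $B_\ell$ (hence $a^{(j)}_\ell$) is a non-automorphism of $\mathbb D$. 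After permuting coordinates so that the $j$-th position becomes the first, the remark preceding the theorem guarantees that the unique left inverse of $f^{(j)}$ is the projection $\pi_j$, which is linear, hence Lipschitz on all of $\mathbb D^n$. Proposition~\ref{proposition:invariance-left-inverse} applied with $G = \pi_j$ then yields $\pi_j \circ F \circ f^{(j)} = \mathrm{id}_{\mathbb D}$; evaluating at $\lambda = w_j$ gives $F_j(w) = w_j$. Varying $j \in I$ and $w \in \mathbb D^n$ proves $F_j \equiv \pi_j$ on $\mathbb D^n$ for every $j \in I$.

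If $I = \{1,\dots,n\}$ (the distinguished-boundary case), this proves $F = \mathrm{id}$. Otherwise, $F$ now factors as $F(z) = (z_I, G(z))$ for a holomorphic $G\colon\mathbb D^n\to\mathbb D^{n-|I|}$, and the remaining step is to show $G(z) = z_{\bar I}$. The main obstacle is that Proposition~\ref{proposition:invariance-left-inverse} cannot be invoked directly for $j \notin I$: no complex geodesic $f$ with $f(1) = p$ can have $f_j$ an automorphism of $\mathbb D$ when $|p_j| < 1$, and the remark then leaves no left inverse depending on the $j$-th coordinate. I would overcome this by combining the now-established fiber structure with a Schwarz--Pick/Montel argument showing that the holomorphic family $z_I \mapsto G(z_I,\cdot)$ of self-maps of $\mathbb D^{n-|I|}$ converges to the identity as $z_I \to p_I$ (using that the $o(\|z-p\|^3)$ decay forces every subsequential limit to coincide with the identity on a neighborhood of $p_{\bar I}$, hence everywhere by analytic continuation), and then invoking a boundary Fatou-type uniqueness argument in the $z_I$-variable to propagate the equality $G = \pi_{\bar I}$ to all of $\mathbb D^n$.
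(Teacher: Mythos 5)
Your first step---showing $F_j\equiv \pi_j$ for every $j$ with $|p_j|=1$ by feeding the geodesics $f^{(j)}$ (identity in the $j$-th slot, explicit non-automorphism interpolants elsewhere, so that the remark before the theorem pins down $\pi_j$ as the unique left inverse) into Proposition~\ref{proposition:invariance-left-inverse}---is correct and is essentially the paper's own argument, just with the interpolants written out. The gap is in the case $I\neq\{1,\dots,n\}$. A first, repairable, imprecision: for fixed $w\neq p_{\bar I}$ the points $(z_I,w)$ stay bounded away from $p$, so the hypothesis $F(z)=z+o(\|z-p\|^3)$ says nothing about $G(z_I,w)$ as $z_I\to p_I$; what one can actually extract is that any Montel limit $h$ of $G(z_I,\cdot)$ satisfies $h(w)=w+o(\|w-p_{\bar I}\|^3)$ at the \emph{interior} point $p_{\bar I}$, whence $h=\mathrm{id}$ by Cartan's uniqueness theorem, not by ``agreement on a neighborhood plus analytic continuation.'' The fatal step is the last one: the ``boundary Fatou-type uniqueness argument'' you invoke does not exist in the form you need. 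Knowing that the bounded holomorphic function $z\mapsto G_j(z)-z_j$ tends to $0$ as $z_I\to p_I$, locally uniformly in $z_{\bar I}$, does not force it to vanish identically: $p_I$ is a single boundary point of $\mathbb D^{|I|}$, and for instance $z\mapsto (1-z_1)^4/16$ is a nonzero bounded holomorphic function with unrestricted limit $0$ (to high order) at such a point. Uniqueness theorems of Fatou/Privalov type require vanishing on a boundary set of positive measure, which you do not have. Note also that in the extreme case $|I|=1$ your argument would yield a new proof of the smooth-point case with no external input, which is a further sign that something is missing.

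The paper closes this case by a different device, which you would need to adopt (or replace by a genuinely new mechanism). It restricts $F$ to the diagonal $z_1=\dots=z_k$ in the unimodular coordinates, obtaining a self-map $G$ of $\mathbb D^{n-k+1}$ satisfying the Burns--Krantz condition at the \emph{smooth} boundary point $(p_1,p_{k+1},\dots,p_n)$; the result of \cite{For-Rong 2021} then gives that this restriction is the identity. The equality is propagated off the diagonal by an interpolation argument rather than a boundary uniqueness theorem: given $H:\mathbb D^n\to\mathbb D$ with $H(z_1,\dots,z_1,z_n)=z_n$ and an arbitrary $z$, one chooses $v_n,w_n\in\mathbb D$ far apart in the Poincar\'e distance and self-maps $a_1,\dots,a_{n-1}$ of $\mathbb D$ with $a_j(z_n)=z_j$, $a_j(v_n)=\alpha$, $a_j(w_n)=\beta$; then $\lambda\mapsto H(a_1(\lambda),\dots,a_{n-1}(\lambda),\lambda)$ is a holomorphic self-map of $\mathbb D$ with two distinct fixed points $v_n,w_n$, hence the identity by the Schwarz--Pick lemma, and evaluating at $\lambda=z_n$ gives $H(z)=z_n$.
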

\begin{remark}
    Recall that the results of \cite{For-Rong 2021} imply the above theorem for smooth boundary points $p$.  Actually, in our proof we rely on Proposition~\ref{proposition:invariance-left-inverse} in non-smooth points that combined with results from \cite{For-Rong 2021} gives a complete proof.
\end{remark}

\begin{proof}
Let $F:\mathbb D^n\to\mathbb D^n$ be a holomorphic mapping satisfying the assumption $F(z)=z+o(||z-p||^3)$ where $p$ is a boundary point of $\mathbb D^n$.
Without loss of generality we may assume that $|p_j|=1$, $j=1,\ldots,k$, $|p_j|<1$, $j=k+1,\ldots,n$ where $1\leq k\leq n$. 

First we prove that $F_j(z)=z_j$, $j=1,\ldots,k$.

Certainly, it is sufficient to prove that $F_1(z)=z_1$. 

Consider the (many) holomorphic mappings $a_j:\mathbb D\to\mathbb D$, $j=2,\ldots,n$ extending holomorphically through $1$ such that $a_j(1)=p_j$ and $a_j$ is not an automorphism of $\mathbb D$, $j=2,\ldots,n$. It is elementary to see that for the fixed element $\lambda\in\mathbb D$ the family of all values of $\{(a_2(\lambda),\ldots,a_n(\lambda))\}$ over all such $a_j$'s is $\mathbb D^{n-1}$. 

The mapping $f:\mathbb D\owns\lambda\to (\lambda,a_2(\lambda),\ldots,a_n(\lambda))\in\mathbb D^n$ is a complex geodesic with the (uniquely determined) left inverse $z_1$ and by Proposition~\ref{proposition:invariance-left-inverse} the composition $F\circ f$ must be a complex geodesic with the same left inverse. In other words $F_1(\lambda,a_2(\lambda),\ldots,a_n(\lambda))=\lambda$ for all $\lambda\in\mathbb D$ and all possible functions $a_j$ as determined above. This gives that $F_1(z)=z_1$ as claimed.

In the case $k=n$ the above finishes the proof. Assume then that $1\leq k<n$.

Consider the mapping
\begin{equation}
    G:\mathbb D^{n-k+1}\owns (z_1,z_{k+1},\ldots,z_n)\mapsto F_{(1,k+1,\ldots,n)}(z_1,\ldots,z_1,z_{k+1},\ldots,z_n)\in\mathbb D^{n-k+1}.
    \end{equation}
Note that $G$ satisfies the assumption of the Burns-Krantz theorem at the smooth boundary point $(p_1,p_{k+1},\ldots,p_n)$ from $\partial\mathbb D^{n-k+1}$. By a result from \cite{For-Rong 2021} we get that $G$ is the identity. To finish the proof we need to conclude that $F$ is the identity, too. 

In other words it is sufficient to show that the following Claim holds.

{\bf Claim} Let $H:\mathbb D^n\to\mathbb D$ be a holomorphic function such that $H(z_1,\ldots,z_1,z_n)=z_n$ for all $(z_1,z_n)\in\mathbb D^{2}$. Then $H(z)=z_n$, $z\in\mathbb D^n$.

Take arbitrary point $z=(z_1,z_2,\ldots,z_n)\in\mathbb D^n$. Fix $\alpha,\beta\in\mathbb D$. It follows from one-dimensional interpolation problem  that one may find distinct numbers $v_n,w_n\in\mathbb D$ sufficiently close to distinct boundary points of $\partial \mathbb D$ (or the ones with big enough Poincar\'e distance) and holomorphic mappings $a_j:\mathbb D\to\mathbb D$, $j=1,\ldots,n-1$ with $a_j(v_n)=\alpha$, $a_j(w_n)=\beta$ and $a_j(z_n)=z_j$, $j=1,\ldots,n-1$. Consider the holomorphic function 
$$\mathbb D\owns g:\lambda\to H(a_1(\lambda),\ldots,a_{n-1}(\lambda),\lambda)\in\mathbb D.
$$
As $g(v_n)=H(\alpha,\ldots,\alpha,v_n)=v_n$ and $g(w_n)=H(\beta,\ldots,\beta,w_n)=w_n$ we get by the Schwarz Lemma that $g(\lambda)=\lambda$, $\lambda\in\mathbb D$. In particular, $H(z)=g(z_n)=z_n$.

 %{\bf Claim} Let $H:\mathbb D^n\to\mathbb D$ be a holomorphic function %such that $H(z_1,\ldots,z_1,z_{k+1},\ldots,z_n)=z_n$ for all %(z_1,z_{k+1},\ldots,z_n)\in\mathbb D^{n-k+1}$. Then $H(z)=z_n$, %$z\in\mathbb D^n$.

%To prove the Claim for fixed distinct points %$\lambda_1,\lambda_2\in\mathbb D$ consider families of holomorphic %functions $a_j:\mathbb D\to\mathbb D$, $j=1,\ldots,k$ with %$a_1(\lambda_j)=\ldots=a_k(\lambda_j)$, $j=1,2$ and holomorphic functions %$a_{k+1},\ldots,a_{n-1}:\mathbb D\to\mathbb D$. Then for any fixed %$\lambda\in\mathbb D$ the set of elements $\{(a_1(\lambda),\ldots,a_{n-1}
%(\lambda))\}$ over all such distinct $\lambda_1,\lambda_2$ and all %$a_m$'s is $\mathbb D^{n-1}$. As $H(a_1(\lambda_j),\ldots,a_{n-1}%(\lambda_j),\lambda_j))=\lambda_j$ by the Schwarz Lemma we get that %$H(a_1(\lambda),\ldots,a_{n-1}(\lambda),\lambda)=\lambda$, %$\lambda\in\mathbb D$. As the equality holds for systems of %$\lambda_1,\lambda_2$ and functions $a_m$'s as above we get %$H(z,\lambda)=\lambda$, $(z,\lambda)\in\mathbb D^{n-1}\times\mathbb D$ %and finish the proof of the Claim.
\end{proof}

\section{The symmetrized bidisc} We continue applying our method to get the property in one more special domain. {\it The symmetrized bidisc} defined as 
\begin{equation}
\mathbb G_2:=\{(\lambda_1+\lambda_2,\lambda_1\lambda_2):\lambda_1,\lambda_2\in\mathbb D\}
\end{equation}
has proven to be an essential example in the Lempert theory. It is a Lempert domain that is not biholomorphic to a convex domain (see \cite{Agl-You 2001}, \cite{Agl-You 2004} and \cite{Cos 2004}). Moreover, it has some interesting properties that will play a role in the application of the method introduced earlier while proving the Burns-Krantz property; namely, all complex geodesics of $\mathbb G_2$ extend holomorphically through the boundary (compare the formulas for complex geodesics in $\mathbb G_2$ -- \cite{Agl-You 2006} and \cite{Pfl-Zwo 2005}). Moreover, a complete description of the uniqueness property of left inverses for complex geodesics in $\mathbb G_2$ is known. The left inverses are also regular (but only to some extent) -- that would be made clearer later which would make possible to make use of Proposition~\ref{proposition:invariance-left-inverse}. A good example of {\it the universal Carath\'eodory set} in the symmetrized bidisc is a class of functions of the form:
\begin{equation}
    \Psi_{\omega}(s,p):=\frac{2p-\omega s}{2-\overline{\omega}s},\; (s,p)\in\mathbb G_2, \; \omega\in\overline{\mathbb D}.
\end{equation}
Recall that the universal Carath\'eodory set is
 a family of functions that could replace the class of all bounded by one holomorphic functions in the definition of the Carath\'eodory distance or in the case of Lempert domains it may be understood as the family of functions out of which one may find for any complex geodesic in the domain a left inverse.
 
The minimal Carath\'eodory universal set for the symmetrized bidisc exists and consists of functions $\Psi_{\omega}$, $|\omega|=1$ (see \cite{Agl-Lyk-You 2019b}).

Note also that the functions $\Psi_{\omega}$ extend holomorphically through all the boundary points of $\mathbb G_2$ with the exception for $|\omega|=1$. Namely, in this case the function $\Psi_{\omega}$ does not extend even continuously through one boundary point $(2\omega,\omega^2)$.

Let us also draw attention to another special property that differs the symmetrized bidisc from the bidisc. The boundaries of all complex geodesics of the symmetrized bidisc are lying in the Shilov boundary of $\mathbb G^2$ -- recall that the Shilov boundary of $\mathbb G_2$ is $\{(\lambda_1+\lambda_2,\lambda_1\lambda_2):\lambda_1,\lambda_2\in\partial \mathbb D\}$. In other words no other element of the topological boundary may be joint by a geodesic with the interior point. This is the reason why our method of proving the Burns-Krantz rigidity property may work only for points from the Shilov boundary of the symmetrized bidisc.

We prove the following
\begin{theorem}\label{theorem:burns-krantz-property-symmetrized-bidisc} The Burns-Krantz rigidity property holds for $(\mathbb G_2,w)$ for any $w$ from the Shilov boundary of $\mathbb G_2$.
\end{theorem}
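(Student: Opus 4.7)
\emph{Setup and key observation.} Write $w=(\omega_1+\omega_2,\omega_1\omega_2)$ with $|\omega_1|=|\omega_2|=1$. My plan is to apply Proposition~\ref{proposition:invariance-left-inverse} using the family of universal Carath\'eodory functions $\Psi_\omega$, $|\omega|=1$, exploiting that every complex geodesic $f:\D\to\mathbb G_2$ with $f(1)=w$ extends holomorphically through $\partial\D$ (and is therefore automatically Lipschitz at $1$), while each $\Psi_\omega$ fails to extend continuously to $\partial\mathbb G_2$ only at the single point $(2\omega,\omega^2)$. A direct computation shows $(2\omega,\omega^2)=(\omega_1+\omega_2,\omega_1\omega_2)$ precisely when $\omega_1=\omega_2=\omega$, so every $\Psi_\omega$ is Lipschitz at $w$ in the generic case $\omega_1\neq\omega_2$, while every $\omega\neq\omega_0$ is admissible in the degenerate case $\omega_1=\omega_2=:\omega_0$.

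\emph{Rigidity along a single geodesic.} For each geodesic $f$ through $w$ I would, following \cite{Kos-Zwo 2016} and \cite{Agl-Lyk-You 2019b}, identify the values $\omega\in\partial\D$ for which $\Psi_\omega\circ f$ is a M\"obius rotation, and rescale these to genuine left inverses $G_\omega=c_\omega\Psi_\omega$ (with $|c_\omega|=1$), whose Lipschitz regularity at $w$ matches that of $\Psi_\omega$. When $f$ admits two such distinct $\omega^\pm$ both giving Lipschitz left inverses at $w$, Proposition~\ref{proposition:invariance-left-inverse} forces $G_{\omega^\pm}\circ F\circ f=\id_{\D}$; the analytic set $\Phi^{-1}(\Delta)\subset\mathbb G_2$ (where $\Phi:=(G_{\omega^+},G_{\omega^-}):\mathbb G_2\to\D^2$ and $\Delta\subset\D^2$ is the diagonal) is then one-dimensional by a dimension count, and the continuity of $F\circ f$ up to $\lambda=1$ together with the Burns-Krantz contact $F(f(\lambda))-f(\lambda)=o(|\lambda-1|^3)$ confines $F\circ f$ to the same irreducible component of $\Phi^{-1}(\Delta)$ as $f$. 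Thus $F\circ f$ reparameterizes $f$, and a one-dimensional Burns-Krantz argument as in Proposition~\ref{proposition:invariance-left-inverse} gives $F\circ f=f$.

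\emph{Sweeping out a neighborhood of $w$ and the main obstacle.} In the generic case $\omega_1\neq\omega_2$ the above setup applies to every non-flat non-royal geodesic through $w$, and letting $f$ vary through a pencil of such geodesics terminating at $w$ covers a relatively open subset of $\mathbb G_2$ near $w$; hence $F=\id$ there, and then on all of $\mathbb G_2$ by analytic continuation. The real difficulty is the degenerate case $w=(2\omega_0,\omega_0^2)$, where removing the forbidden value $\omega_0$ may leave some non-royal geodesic through $w$ with only a single admissible left inverse. The royal geodesic $R_{\omega_0}(\lambda)=(2\omega_0\lambda,\omega_0^2\lambda^2)$ is still manageable because every $\omega\neq\omega_0$ yields (up to rescaling) an admissible left inverse for it, so the previous step gives $F\circ R_{\omega_0}=R_{\omega_0}$; the hard part will be to promote this rigidity on the royal variety, combined with the single-left-inverse data on non-royal geodesics through $w$, to full rigidity in a neighborhood of $w$.
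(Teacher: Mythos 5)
Your case split (generic $\omega_1\neq\omega_2$ versus degenerate $\omega_1=\omega_2$) matches the paper's, and your observation about where each $\Psi_\omega$ fails to extend is correct, but both halves of the argument have genuine gaps. In the generic case your sweeping argument needs every (or at least an open family of) non-flat, non-royal geodesics through $w$ to carry \emph{two} distinct admissible left inverses $\Psi_{\omega^\pm}$ with $|\omega^\pm|=1$; by the classification from \cite{Kos-Zwo 2016} recalled in Remark~\ref{remark:description-left-inverses}, a non-flat geodesic meeting the royal variety at exactly one point has exactly \emph{one} left inverse, and for geodesics omitting the royal variety one only knows there are \emph{at most} two unimodular $\Psi_\omega$'s. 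So the two-left-inverse mechanism does not apply to the generic geodesic through $w$, and the family to which it does apply is not shown to sweep out an open set. The paper instead exploits precisely the geodesics you set aside: the flat geodesics $\lambda\mapsto(\beta+\overline{\beta}\lambda,\lambda)$ through $w$, which admit all $\Psi_\omega$, $|\omega|\le 1$, as left inverses. One applies the one-variable Burns--Krantz theorem to $\lambda\mapsto F_2(\beta+\overline{\beta}\lambda,\lambda)$ to get $F_2(s,p)=p$, then uses Proposition~\ref{proposition:invariance-left-inverse} and the classification to see that $F$ maps each such flat geodesic to a flat geodesic $\lambda\mapsto(\gamma+\overline{\gamma}\lambda,\lambda)$, and the cubic contact forces $\gamma=\beta$; the identity principle finishes.

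In the degenerate case $w=(2,1)$ you correctly get that $F$ fixes the royal geodesic (hence $F(0,0)=(0,0)$), but you explicitly leave open the promotion to a neighborhood, and that is where the real work lies. The missing idea is the treatment of the geodesics $f_t(\lambda)=\left(2\lambda\frac{1-t}{1-t\lambda},\lambda\frac{\lambda-t}{1-t\lambda}\right)$ joining $(0,0)$ to $(2,1)$: their unique left inverse $-\Psi_1$ does not extend continuously to $(2,1)$, so Proposition~\ref{proposition:invariance-left-inverse} is not applicable. The paper computes directly that $-\Psi_1(F(f_t(\lambda)))=\lambda+o((\lambda-1)^2)$ near $1$, invokes the interior normalization $-\Psi_1(F(f_t(0)))=0$ together with Lemma 2.1 of \cite{Hua 1995} to conclude $-\Psi_1\circ F\circ f_t=\mathrm{id}$, so that $F\circ f_t=f_s$ for some $s\in[0,1]$, and then matches the derivatives $f_t'(1)=\frac{2}{1-t}(1,1)$ forced by the Burns--Krantz contact to get $s=t$. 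Without a substitute for this weaker one-sided estimate and the derivative comparison, your degenerate case cannot be closed.
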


Before we go into the proof let us make remarks how we apply our method while showing the Burns-Krantz rigidity property at Shilov boundary points of $\mathbb G_2$. Recall that in the paper \cite{Kos-Zwo 2016} a complete description of uniqueness property of left inverses in the symmetrized bidisc is given; additionally, in the case of the non-uniqueness a description of left inverses of the form $\Psi_{\omega}$ is found. Recall also that the left inverses may be meant to be understood as functions up to a composition with an automorphism of the unit disc, i. e. the composition of the left inverse with the complex geodesic equals the identity up to an automorphism of the unit disc. 

%Note that all the complex geodesic in the symmetrized bidisc %touch the boundary only at the points of the Shilov boundary %of $\mathbb G_2$ -- this is completely different from the %case of the bidisc. This is also the reason why our method of %the proof may work only for the points from the Shilov %boundary of $\mathbb G_2$. 
While proving the theorem we may consider only points from the Shilov boundary of the form $(1+\omega,\omega)$ (with $|\omega|=1$) -- the consideration of such points does not restrict the generality of our proof. 

The rough idea of the proof is the following. The complex geodesics touching the boundary at the point $(1+\omega,\omega)$, $|\omega|=1$ are mapped by the mapping $F$ satisfying the Burns-Krantz condition at $(1+\omega,\omega)$ to complex geodesics having the left inverses as the original one (with possibly new ones). This is a general idea that encounters an obstacle as the existing left inverse may not satisfy the assumption of Proposition~\ref{proposition:invariance-left-inverse}. That obstacle turns up in some cases and is overcome suitably.

%Therefore, we choose another direct way of this %reasoning that will overcome this obstacle. We %start with the boundary point %$(1+\omega,\omega)$ with $|\omega|=1$, %$\omega\neq 1$. 

\begin{remark}\label{remark:description-left-inverses} A general idea of the proof relies on comparing sets of left inverses of the form $\Psi_{\omega}$ that are related to the given complex geodesic in $\mathbb G_2$.  Below we summarize the results of \cite{Kos-Zwo 2016} (Theorem 5.4 and remarks before it) which comprise the results focusing mainly to the complex geodesics we shall be interested in.

A very special complex geodesic in the symmetrized bidisc is {\it the royal geodesic} (and its image is called {\it the royal variety}) 
\begin{equation}
\mathbb D\owns\lambda\to(2\lambda,\lambda^2)\in\mathbb G_2.
\end{equation}
Then all the functions $\Psi_{\omega}$, $|\omega|= 1$, are its left inverses. Moreover, the royal geodesic is the only one having as left inverses $\Psi_{\omega}$'s, $|\omega|=1$ and such that no $\Psi_{\omega}$ (with $|\omega|<1$) is its left inverse.

For all $\beta\in\mathbb D$ the mappings 
\begin{equation}
    \mathbb D\owns\to(\beta+\overline{\beta}\lambda,\lambda)\in\mathbb G_2
    \end{equation}
are so called {\it flat geodesics} (see \cite{Agl-Lyk-You 2019}) -- the flat geodesics are generated by geodesics of the form $(0,\lambda)=-\pi(-\sqrt{\lambda},\sqrt{\lambda})$ as considered in \cite{Pfl-Zwo 2005} and then composed with automorphisms of the symmetrized bidisc. All functions $\Psi_{\omega}$, $|\omega|\leq 1$ are left inverses of all flat geodesics.

We also need to know that the complex geodesics intersecting the royal variety at exactly one point and not being the flat geodesics have exactly one left inverse (being a function $\Psi_{\omega}$). 

In our proof we shall also be interested in complex geodesics passing through $(0,0)$ and touching the boundary at $(2,1)$. By the description of complex geodesics they must be of the following form (up to a composition with an automorphism of the unit disc) - use for instance Theorem 1 and Remark afterwards in \cite{Pfl-Zwo 2005}:
\begin{equation}
      f_t(\lambda):=\left(2\lambda\frac{1-t}{1-t\lambda},\lambda\frac{\lambda-t}{1-t\lambda}\right),\; t\in(0,1),\;\lambda\in\mathbb D.
\end{equation}
One may easily verify that $-\Psi_1(f_t(\lambda))=\lambda$, $t\in(0,1)$, $\lambda\in\mathbb D$. As already mentioned $\Psi_1$ is the only left inverse for all $f_t$, $t\in(0,1)$.

It should also be remarked that there is one more class of complex geodesics: the ones omitting the royal variety. Though they will not be so essential in the proof we shall use the fact that no more than two different $\Psi_{\omega}$'s (with both $|\omega|=1$) are left inverses of such geodesics.
\end{remark}

\begin{proof}[Proof of Theorem~\ref{theorem:burns-krantz-property-symmetrized-bidisc}]
As already mentioned we lose no generality assuming that the boundary point is $(1+\omega,\omega)$, $|\omega|=1$. Let $F:\mathbb G_2\to \mathbb G_2$ be a holomorphic mapping satisfying the Burns-Krantz condition at $(1+\omega,\omega)\in\partial\mathbb G_2$.

At first we consider $\omega\neq 1$. For $\beta\in\mathbb D$
such that $\beta+\overline{\beta}\omega=1+\omega$ (there are 'many' such $\beta$'s) consider the function:
\begin{equation}
    g_{\beta}:\mathbb D\owns\lambda\to F_2(\beta+\overline{\beta}\lambda,\lambda)\in\mathbb D.
\end{equation}
Actually, we may exactly determine what 'many' $\beta$'s as described above means. They are of the form $1+r\sqrt{-\omega}$ where $\sqrt{-\omega}$ is one of possible roots and $r$ are either positive or negative real numbers with arbitrary small absolute value (so that $|1+r\sqrt{-\omega}|<1$.

Assumption on $F$ implies that 
 \begin{multline}
F(\beta+\overline{\beta}\lambda,\lambda)=(\beta+\overline{\beta}\lambda,\lambda)+o(||(\beta+\overline{\beta}\lambda-(1+\omega),\lambda-\omega)||^3)=\\(\beta+\overline{\beta}\lambda,\lambda)+o(|\lambda-\omega|^3)
 \end{multline}
 In particular,
 $g_{\beta}$ satisfies the Burns-Krantz property at $\omega$ which implies that $g_{\beta}$ is the identity. Thus $F_2(\beta+\overline{\beta}\lambda,\lambda)=\lambda$ for all $\lambda\in\mathbb D$ and $\beta$ from a 'big' set (depending on $\omega$). The identity principle gives $F_2(s,p)=p$, $(s,p)\in\mathbb G_2$. To finish the proof we need to verify that $F_1(s,p)=s$, $(s,p)\in\mathbb G_2$. We proceed as follows. By comparing left inverses to complex geodesics given in Remark~\ref{remark:description-left-inverses} and applying Proposition~\ref{proposition:invariance-left-inverse} we get that for any $\beta\in\mathbb D$ the equality $F_1(\beta+\overline{\beta}\lambda,\lambda)=\gamma+\overline{\gamma}\lambda$ holds for any $\lambda\in\mathbb D$ where $\gamma+\overline{\gamma}\omega=1+\omega$ for some $\gamma\in\mathbb D$. From the formula above we get that $\gamma+\overline{\gamma}\lambda=F_1(\beta+\overline{\beta}\lambda,\lambda)=\beta+\overline{\beta}\lambda+o((\lambda-\omega)^3)$ which gives that $\gamma=\beta$ or $F_1(\beta+\overline{\beta}\lambda,\lambda)=\beta+\overline{\beta}\lambda$, $\lambda\in\mathbb D$ and $\beta$ from the 'big' set. Then the identity principle easily finishes the proof.

Consider now the Shilov boundary point $(2,1)$. Recall that the royal geodesic $\mathbb D\owns\lambda\to(2\lambda,\lambda^2)\in\mathbb G_2$ is the geodesic for which all the functions $\Psi_{\omega}$, $|\omega|= 1$ are left inverses. By Proposition~\ref{proposition:invariance-left-inverse} we get consequently that $F(2\lambda,\lambda^2)$ is a geodesic with all functions $\Psi_{\omega}$, $|\omega|=1$, $\omega\neq 1$ being its left inverse. As $F(2\lambda,\lambda^2)\to(2,1)$ as $\lambda\to 1$ and no flat geodesic 'touches' the boundary point $(2,1)$ we get by Remark~\ref{remark:description-left-inverses} that $F$  leaves the royal geodesic invariant so $F(2\lambda,\lambda^2)=(2\lambda,\lambda^2)$, $\lambda\in\mathbb D$. In particular, $F(0,0)=(0,0)$. We shall look now at the transformation of other complex geodesics joining $(0,0)$ with $(2,1)$ under the mapping $F$. We cannot apply directly Proposition~\ref{proposition:invariance-left-inverse} as the only (in the case the complex geodesic passing through the origin is neither the royal nor the flat geodesic) left inverse $\Psi_1$ does not extend continuously through $(2,1)$. However, we apply the idea presented in that proposition directly to see that for such complex geodesics $f$ (with $f(0)=(0,0)$, $f(1)=(2,1)$) the transformation $F\circ f$ will also be a geodesic with the same left inverse $\Psi_1$ and then $F\circ f$ would equal $f$ after taking into account some more information. 

Recall that the complex geodesics under consideration are of the form
\begin{equation}
    f_t(\lambda):=\left(2\lambda\frac{1-t}{1-t\lambda},\lambda\frac{\lambda-t}{1-t\lambda}\right),\; t\in(0,1),\;\lambda\in\mathbb D.
\end{equation}
We also have $-\Psi_1(f_t(\lambda))=\lambda$, $\lambda\in\mathbb D$, $t\in(0,1)$. We look at the function $\mathbb D\owns\lambda\to -\Psi_1(F(f_t(\lambda)))$. By the calculations we get that the last expression near $1$ behaves like 
\begin{equation}
    \lambda+o((\lambda-1)^2).
\end{equation}
Since $-\Psi_1(F(f_t(0)))=0$ we get 
by Lemma 2.1 from \cite{Hua 1995} that $-\Psi_1(F(f_t(\lambda)))=\lambda$, $\lambda\in\mathbb D$. This means that $F\circ f_t$ is a complex geodesic $f_s$ for some $s\in[0,1]$ (for $s=0$ we have a royal geodesic and $s=1$ a flat geodesic).

Calculating directly we also get that $f_t^{\prime}(1)=2/(1-t)(1,1)$.

The assumption on $F$ easily implies that the derivatives at $1$ of $f_t$ and $f_s$ must be equal so $t=s$, which gives $F(f_t(\lambda))=f_t(\lambda)$, $t\in(0,1)$, $\lambda\in\mathbb D$.

The identity principle implies that $F$ is the identity.

\end{proof}

\section{Bounded symmetric domains - discussion}
It is quite natural to proceed further with more general non-smooth domains, a good source of examples would be bounded symmetric domains in its Harish-Chandra realization. We show below a natural attitude to that problem that however cannot be applied in full generality (for all bounded symmetric domains). To do it we make use of the polydisc theorem that reduces the problem to the case of the polydisc. We follow the idea presented in the proof of Lemma 3.1 of \cite{Ng-Rong 2024}.

\begin{remark} Fix a boundary point $p$ of the bounded symmetric domain $D\subset\mathbb C^n$. For any point $z\in D$ consider a Lipschitz continuous complex geodesic $f$ joining $z$ and $p$. The fact that there are such geodesics follows from the symmetry of the domain: the domain is balanced and convex and the involutive automorphisms extend holomorphically through the boundary. By the polydisc theorem we find an $r$-dimensional polydisc $P$  ($r$ is the rank of the domain $D$) embedded in $D$ with the graph of the geodesic $f$ lying in $P$. Taking the automorphism of $D$ we may assume that $P$ is $\mathbb D^r\times\{0\}^{n-r}$, $z=0$ and the orthogonal projection maps $D$ to $P$. Additionally, $f(\lambda)=(\lambda,f_2(\lambda),\ldots,f_r(\lambda),0,\ldots,0)$, $\lambda\in\mathbb D$ with $f_j:\mathbb D\to\mathbb D$ being holomorphic, Lipschitz continuous at $1$, $f_j(0)=0$, $j=2,\ldots,r$. 

Let $\pi$ be the projection of $\mathbb C^n$ onto $\mathbb C^r\times\{0\}^{n-r}$. 

Let us make the following additional assumption

\begin{equation}
    \text{ all the boundary points of $D$ except for the ones in $\partial P$ are mapped to $P$.}\label{ref:assumption} 
\end{equation}

\bigskip

By Theorem~\ref{theorem:burns-krantz-polydisc} applied to $P\owns w\to \pi\circ F(w)\in P$ we know that $\pi(F(w))=w$ or $F(f(\lambda))=(\lambda,f_2(\lambda),\ldots,f_n(\lambda))$, where new holomorphic functions $f_{r+1},\ldots,f_n:\mathbb D\to\mathbb D$ appear. Now the assumption (\ref{ref:assumption}) on $D$ implies that $\lim_{\lambda\to \partial\mathbb D}f_j(\lambda)=0$, $j=r+1,\ldots,n$ which implies $f_{r+1}\equiv\ldots\equiv f_n\equiv 0$ and consequently shows that $F$ when restricted to the graph $f$ is the identity so $F(0)=0$. As the point $z\in D$ that we started with was arbitrary this finishes the proof.
\end{remark}

\begin{remark}
    Note that we cannot hope the above assumption (\ref{ref:assumption}) to hold in all bounded symetric domains. Let $\Omega(n)$ be a Cartan domain of the first type $I_{n,n}$. Its rank is $n$. Then the projection
    \begin{equation}
    \Omega(n)\owns A\to (a_{11},\ldots,a_{nn})\in\mathbb D^n
    \end{equation}
    does not satisfy assumption (\ref{ref:assumption}) for $n\geq 3$ though the condition is satisfied for $n=2$.
\end{remark}

\section{Concluding remarks, possible area of future research and some open problems}

We presented approach to the study of the Burns-Krantz type rigidity theorem in Lempert domains without the smoothness assumption of the domain which may lead to the further research.

Note that though the problem for the polydisc has been solved it is interesting whether we could use the above developed method also to smooth boundary points of the polydisc so that we would not refer to other papers.

A limitation on the case of the symmetrized bidisc only to points from its Shilov boundary (in other words non-smooth ones) depended on a very specific boundary behavior of complex geodesics. This is however natural to pose the question whether the Burns-Krantz rigidity theorem holds also for smooth boundary points of the symmetrized bidisc.

A quite natural challenge would be to see whether the Burns-Krantz rigidity property may be generalized to the symmetrized polydisc. Note that the method we make use of is not applicable in that situation as the higher dimensional symmetrized polydisc is not a Lempert domain.

That would also be intersting to give more examples of new examples of domains with the Burns-Krantz rigidity property; especially possibly utilizing the above mentioned method. More concretely, what about the tetrablock or its higher dimensional generalization that is a domain $\mathbb L_n$ (see \cite{Gho-Zwo 2023})?

\section{Acknowledgments} The author would like to thank professor Feng Rong for his comments on the early version of the paper.

\end{document}